\numberwithin{equation}{subsection}
\newtheorem{thm}{Theorem}
\newtheorem*{thm*}{Theorem}
\newtheorem*{cor*}{Corollary}
\newtheorem{prop-const}[thm]{Proposition-Construction}
\newtheorem*{conjecture*}{Conjecture}
\newtheorem*{princ*}{Principle}
\theoremstyle{remark}
\newcommand{\xar}[1]{\xrightarrow{#1}}
\newcommand{\bA}{{\mathbb A}}
\newcommand{\bB}{{\mathbb B}}
\newcommand{\bG}{{\mathbb G}}
\newcommand{\bZ}{{\mathbb Z}}
\newcommand{\fm}{{\mathfrak m}}
\newcommand{\on}{\operatorname}
\newcommand{\mathendash}{\text{\textendash}}
\newcommand{\Ext}{\on{Ext}}
\newcommand{\Spec}{\on{Spec}}
\newcommand{\Tor}{\on{Tor}}
\newcommand{\Vect}{\mathsf{Vect}}
\newcommand{\Sym}{\on{Sym}}
\renewcommand{\mod}{\mathendash\mathsf{mod}}
\newcommand{\colim}{\on{colim}}
\newcommand{\ShvCat}{\mathsf{ShvCat}}
\renewcommand{\lim}{\on{lim}}
\newcommand{\heart}{\heartsuit}
\newcommand{\QCoh}{\mathsf{QCoh}}
\renewcommand{\subset}{\subseteq}
\newcommand{\biggg}{\bBigg@{4}}
\newcommand{\Biggg}{\bBigg@{5}}
\date{\today}
\begin{document}

\frenchspacing

\setlength{\epigraphwidth}{0.9\textwidth}
\renewcommand{\epigraphsize}{\footnotesize}

\title{Acyclic complexes and 1-affineness}

\author{Dennis Gaitsgory and Sam Raskin}

\address{Harvard University, 
1 Oxford St, Cambridge, MA 02138.} 
\email{gaitsgde@math.harvard.edu}

\address{Massachusetts Institute of Technology, 
77 Massachusetts Avenue, Cambridge, MA 02139.} 
\email{sraskin@mit.edu}

\begin{abstract}

This short note is an erratum to \cite{shvcat}, correcting the
proof of one of its main results. 
It includes some counterexamples regarding infinite-dimensional
unipotent groups and affine spaces that may be of independent interest. 

\end{abstract}

\maketitle


\setcounter{section}{1}

\subsection{} 

In \cite{shvcat} Theorem 2.4.5, one finds the
claim that $\bA^{\infty} \coloneqq \colim_n \bA^n$
is not\footnote{Recall, however, that there are some remarkably similar
spaces that \emph{are} 1-affine. For example, \cite{shvcat}
Theorem 2.6.3 says that $\bA_{dR}^{\infty}$ is 1-affine. This
result is generalized (in a more sophisticated setup) in \cite{locsys}.}
1-affine,\footnote{We refer to \emph{loc. cit}. for 
the definition of this notion. We also follow the notational conventions
of \emph{loc. cit}., so everything should be understood in the
derived sense.} 
where we work relative to a field
$k$ of characteristic $0$. The proof in \emph{loc. cit}.
is not correct, and the purpose of this note is to correct it.

Along the way, we also give some (possibly new) 
counterexamples on representations of
$\bA^{\infty}$ considered as additive group,
and the formal completion of a pro-infinite dimensional
affine space at the origin. These counterexamples will be a categorical
level down, i.e., they will reveal pathological behavior
for $\QCoh$ rather than $\ShvCat$.

\subsection{What is the problem in \cite{shvcat}?}

In \S 9.4.3 of \emph{loc. cit}., there is a claim that
something is ``easy to see," with no further explanation.
In fact, the relevant claim is not true. This failure invalidates
the argument about 1-affineness given in \emph{loc. cit}.

\subsection{Structure of the argument}

There has only ever been one successful strategy to proving
that a prestack is not 1-affine: showing that 1-affineness would imply 
some functor is co/monadic (usually by computing some tensor
product of DG categories and applying the Beck-Chevalley formalism),
and then showing that the relevant functor is not conservative.

We will exactly follow this strategy. Namely, in 
\S \ref{ss:cons-start}-\ref{ss:cons-finish}, we prove two 
results, Theorems \ref{t:ainfty-reps} and \ref{t:pro-ainfty},
about the non-conservativeness of various functors. 
Then in \S \ref{ss:main}, we deduce that $\bA^{\infty}$ is not 1-affine.

The reader who is most invested in the non-1-affineness of $\bA^{\infty}$
might prefer to read the statement
of Theorem \ref{t:ainfty-reps} and then skip ahead to \ref{ss:main},
returning to read the proofs of the other results after seeing
their application.

\subsection{Invariants and ind-unipotent groups}\label{ss:cons-start}

Our first main result is the following. 

\begin{thm}\label{t:ainfty-reps}

$\Gamma: \QCoh(\bB \bA^{\infty}) \to \Vect$ is not conservative. 

\end{thm}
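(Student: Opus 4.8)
The plan is to exhibit a nonzero object $M\in\QCoh(\bB\bA^{\infty})$ on which the (derived) global sections functor vanishes, $\Gamma(\bB\bA^{\infty},M)\simeq 0$. Since $\bB\bA^{\infty}=\colim_n\bB\bA^n$ and $\QCoh$ carries this colimit of prestacks to the limit of categories along $*$-pullback, I will work through the identification $\QCoh(\bB\bA^{\infty})=\lim_n\QCoh(\bB\bA^n)$, under which
\[
\Gamma(\bB\bA^{\infty},M)\;\simeq\;\lim_n\,\Gamma(\bB\bA^n,M),
\]
the homotopy limit of the tower of finite-level invariants. Everything thus reduces to building a compatible system $(M_n)$, nonzero in the limit, whose tower of invariants has vanishing homotopy limit.

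At each finite stage $\Gamma(\bB\bA^n,-)$ is conservative (for instance, Koszul duality identifies it with a forgetful functor from modules over $\bigwedge^{\bullet}(k^n)$), so no finite term of the tower may vanish; the acyclicity has to be a purely limit-theoretic effect, recorded by the derived functors $\lim$ and $\lim^1$. The mechanism defeating conservativity is already visible on the heart: a representation of $\bA^{\infty}$ is just a vector space with commuting, individually locally nilpotent operators $N_1,N_2,\dots$ (no joint smoothness is imposed in the limit), and such a thing can have \emph{no} invariant vectors. I would make this explicit with the restricted tensor product
\[
M\;=\;{\bigotimes_{i\geq 1}}'\,\bigl(k[y_i],\,y_i\bigr)
\]
of the regular representations $k[y_i]$ of the $i$-th coordinate copy of $\bA^1$, formed with respect to the non-invariant vectors $y_i$. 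Each $N_i=\partial_{y_i}$ is locally nilpotent, so $M$ is a genuine object, and a slot-by-slot computation gives $\bigcap_i\ker N_i=0$.

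This is the right kind of object because $M$ is acyclic over every finite $\bA^n$: writing $M\simeq\bigl(\bigotimes_{i\leq n}k[y_i]\bigr)\otimes M_{>n}$, the first factor is the regular representation of $\bA^n$, so $\Gamma(\bB\bA^n,M)\simeq M_{>n}$ is concentrated in degree $0$ and equals $M^{\bA^n}$. Hence the tower computing $\Gamma(\bB\bA^{\infty},M)$ is the tower of inclusions $\cdots\hookrightarrow M^{\bA^{n}}\hookrightarrow M^{\bA^{n-1}}\hookrightarrow\cdots$, whose limit $\lim_n M^{\bA^n}=M^{\bA^{\infty}}=0$ \emph{already vanishes}. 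The only possible surviving contribution is $\lim^1_n M^{\bA^n}$, in cohomological degree $1$.

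The main obstacle is exactly this $\lim^1$ term. The inclusions $M^{\bA^n}\hookrightarrow M^{\bA^{n-1}}$ have strictly decreasing images, so the tower is not Mittag--Leffler, and indeed for the naive $M$ above one checks that $\lim^1_n M^{\bA^n}\neq 0$: the map $1-\mathrm{shift}$ on $\prod_n M^{\bA^n}$ fails to be surjective because solving the recursion produces infinite, hence inadmissible, combinations of basis vectors. The real content of the argument---the ``acyclic complex'' of the title---is therefore to upgrade $M$ so that \emph{both} $\lim$ and $\lim^1$ (and all higher derived limits) of the tower of invariants vanish: either by a more careful choice of the pairs $(W_i,\xi_i)$ making the resulting tower acyclic, or by assembling finitely many such representations into an honest complex whose associated tower of global sections is annihilated in every degree. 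Carrying out this cancellation---computing the relevant $\lim$ and $\lim^1$ of explicit towers of infinite-dimensional spaces and showing that they kill one another across cohomological degrees---is where I expect essentially all of the difficulty to reside.
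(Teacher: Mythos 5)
Your reduction and your diagnosis are both accurate, but the proposal stops exactly where the proof has to begin, and so it has a genuine gap. You correctly identify that non-conservativity requires a nonzero object whose \emph{derived} invariants vanish entirely; you correctly compute that your candidate $M$ (the restricted tensor product along the vectors $y_i$) has $\lim_n M^{\bA^n}=0$ but, as you yourself note, $\lim^1_n M^{\bA^n}\neq 0$, so that $H^1(\Gamma(\bB\bA^{\infty},M))\neq 0$ and $M$ is \emph{not} a witness; and you then defer ``essentially all of the difficulty'' to an unspecified upgrade of the construction. That deferred step is the entire content of the theorem. Moreover, the route you sketch for completing it --- engineering a tower of invariants whose $\lim$ and $\lim^1$ (and higher derived limits) all cancel, by hand, on the comodule side --- is the hard way around, and it is not clear it can be carried out by tinkering with the pairs $(W_i,\xi_i)$: any such $M$ decomposes over each finite $\bA^n$ with a regular-representation factor, forcing the same non-Mittag--Leffler tower of strictly shrinking subspaces that defeated your first attempt.

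The paper's proof avoids the $\lim^1$ problem entirely by never computing a limit of invariants: it works on the linear-dual (module) side, where the relevant diagram is a \emph{filtered colimit} and hence exact. Concretely, with $A=k[t_1,t_2,\ldots]$ and $I_n=(t_1^2,\ldots,t_n^2,t_{n+1},t_{n+2},\ldots)$, one sets $V=\colim_n A/I_n$ with transition maps given by multiplication by $t_{n+1}$; each $t_i$ then acts on $V$ with square zero. Since derived tensor product commutes with filtered colimits, $V\underset{A}{\otimes}k=\colim_n\bigl(A/I_n\underset{A}{\otimes}k\bigr)$, and every transition map in this colimit is null-homotopic because multiplication by $t_{n+1}$ is zero on $A/\fm$; hence $\Tor_j^A(V,k)=0$ for all $j$, with no derived limit ever appearing. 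Only then does one dualize: because the $t_i$ act with square zero (not merely locally nilpotently --- this is why nilpotence is built into the construction), $V^{\vee}$ is a nonzero object of $\QCoh(\bB\bA^{\infty})^{\heart}$, and $\Ext^j_{\QCoh(\bB\bA^{\infty})}(k,V^{\vee})=\Tor_j^A(k,V)^{\vee}=0$, since linear duality is exact and carries the complex computing the Tor's to the one computing group cohomology. The duality is the maneuver your approach is missing: it converts an exact colimit computation into the limit statement you need, rather than confronting $\lim^1$ directly.
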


In other words, the functor of 
invariants for ind-unipotent groups is not conservative,
even though it is for unipotent groups. 

Moreover, the construction will 
produce a non-zero representation in the heart of the\footnote{Note
that $\bB \bA^{\infty}$ is ind-smooth, in particular
ind-flat, and therefore its $\QCoh$ indeed has an obvious $t$-structure.} 
$t$-structure whose
invariants vanish, and whose higher group cohomologies vanish 
as well. That is to say, this is an essential issue, not the sort resolved
by some kind of renormalization procedure. 

\subsection{}

We will deduce Theorem \ref{t:ainfty-reps} from the next result.

Let $A = k[t_1,t_2,\ldots]$, so $\Spec(A)$ is a pro-infinite
dimensional affine space.\footnote{We avoid geometric notation
here so that $\bA^{\infty}$ has a unique meaning in this text.}

\begin{thm}\label{t:pro-ainfty}

There exists $0 \neq V \in A\mod^{\heart}$ with the properties that:

\begin{itemize}

\item Each $t_i$ acts on $V$ nilpotently (in fact, with square zero).

\item $\Tor_j^{A}(V,k) = 0$ for all $j$, where $k$ is equipped with
the $A$-module structure where each $t_i$ acts by zero. 

\end{itemize}

\end{thm}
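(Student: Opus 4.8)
The plan is to reduce the vanishing of $\Tor^A_*(V,k)$ to the acyclicity of an infinite Koszul complex, and then to realize $V$ as a filtered colimit of complete-intersection modules chosen so that the maps induced on $\Tor$ telescope to zero. First I would record that, since $k = A/\fm$ with $\fm = (t_1,t_2,\ldots)$ and the $t_i$ act on $V$ by commuting square-zero operators, $\Tor^A_*(V,k)$ is computed by the infinite Koszul complex $V\otimes_k \bigwedge^\bullet W$, $W=\bigoplus_i k\cdot\eta_i$, with the differential sending $v\otimes\eta_{j_1}\wedge\cdots$ to $\sum_l (-1)^{l-1}\, t_{j_l}v\otimes\eta_{j_1}\wedge\cdots\widehat{\eta_{j_l}}\cdots$. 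One cannot make this acyclic using a single variable — for one square-zero operator $t$ the two-term complex $V\xrightarrow{t}V$ is acyclic only if $V=0$ — so the construction must genuinely exploit the infinitude of variables.

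Next I would write down the candidate. For $n\ge 0$ set $F_n := A/(t_1^2,\ldots,t_n^2,\,t_{n+1},t_{n+2},\ldots)$; as a $k$-vector space $F_n$ has basis the squarefree monomials in $t_1,\ldots,t_n$, each $t_i$ acts with square zero, and $F_0=k$. Define $\phi_n:F_n\to F_{n+1}$ by $\bar p\mapsto\overline{t_{n+1}\,p}$; a direct check shows $\phi_n$ is $A$-linear and injective, since it carries a squarefree monomial $\mu$ to the squarefree monomial $t_{n+1}\mu$. Put $V:=\colim_n F_n$. Then $V\in A\mod^\heart$, each $t_i$ still acts with square zero, and $V\neq 0$ because the $\phi_n$ are injective with $F_0=k$. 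Intrinsically $V$ has basis the ``cofinite squarefree monomials'' $\prod_{i\in T}t_i$ with $T\subseteq\bN$ cofinite, $t_j$ adjoining $j$ to $T$ (and acting by zero if $j\in T$); every such monomial is divisible, which already forces $\Tor^A_0(V,k)=V/\fm V=0$.

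Then I would compute $\Tor^A_*(V,k)=\colim_n\Tor^A_*(F_n,k)$, using that $\Tor$ commutes with filtered colimits. Each $F_n$ is cut out by the regular sequence $(t_1^2,\ldots,t_n^2,t_{n+1},t_{n+2},\ldots)$, so its Koszul resolution gives $\Tor^A_*(F_n,k)=\bigwedge^\bullet\!\bigl\langle s_1,\ldots,s_n,\eta_{n+1},\eta_{n+2},\ldots\bigr\rangle$, the exterior algebra on degree-one classes $s_i$ (for $t_i^2$, $i\le n$) and $\eta_j$ (for $t_j$, $j\ge n+1$), with vanishing differential. The crucial and most delicate step is identifying $\Tor^A_*(\phi_n)$: by Künneth it is the tensor product over the variables of the slotwise maps, and only the $(n+1)$-st slot is nontrivial, where $\phi_n$ is $\Tor$ of $\alpha:k\to k[t_{n+1}]/(t_{n+1}^2)$, $1\mapsto t_{n+1}$. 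Lifting $\alpha$ to the length-one Koszul resolutions shows that $\alpha$ induces zero on $\Tor_0$ but the isomorphism $\eta_{n+1}\mapsto s_{n+1}$ on $\Tor_1$. Hence $\Tor^A_*(\phi_n)$ annihilates every wedge monomial not involving $\eta_{n+1}$ and carries a monomial containing $\eta_{n+1}$ to the same monomial with $\eta_{n+1}$ replaced by $s_{n+1}$.

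Finally I would run the colimit. A class in $\Tor^A_*(F_n,k)$ is a finite sum of wedge monomials, each involving only finitely many $\eta_j$; by the previous step a monomial survives $\phi_{m-1}\circ\cdots\circ\phi_n$ only if it contains $\eta_{n+1},\eta_{n+2},\ldots,\eta_m$, so every monomial is eventually sent to zero. Therefore $\colim_n\Tor^A_*(F_n,k)=0$, i.e.\ $\Tor^A_j(V,k)=0$ for all $j$, completing the proof. I expect the transition-map computation to be the main obstacle: the surprise is that $\Tor_1$ is not killed but merely shifted, $\eta_{n+1}\mapsto s_{n+1}$, and the real content is to check that this shifting nonetheless forces vanishing in the colimit because each homology class has finite ``$\eta$-support''.
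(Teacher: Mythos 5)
Your proof is correct, and your construction is literally the paper's: your $F_n$ is the paper's $A/I_n$, your $\phi_n$ is its transition map, and your cofinite-monomial basis matches the paper's basis $v_S$ (indexed by the complementary finite set $S$). But your handling of the key step---vanishing of the higher $\Tor$s---genuinely differs from the paper's, and the difference matters. The paper disposes of this step in one line: it asserts that each single transition map
\[
A/I_n \underset{A}{\otimes} A/\fm \xrightarrow{\;t_{n+1}\cdot\;} A/I_{n+1} \underset{A}{\otimes} A/\fm
\]
is nullhomotopic ``since multiplication by $t_{n+1}$ is zero on $A/\fm$,'' and then passes to the colimit. Your K\"unneth computation shows that this assertion is in fact false: the transition map vanishes on $\Tor_0$ but is injective on the span of the monomials containing $\eta_{n+1}$. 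Already in one variable, your slotwise map $\alpha: k \to k[t]/(t^2)$, $1 \mapsto t$, induces an isomorphism on $\Tor_1^{k[t]}(-,k)$, exactly as your resolution-level lift shows; equivalently, under the natural identification $\Tor_1^A(A/I,k) \cong I/\fm I$, the map $\Tor_1(\phi_n)$ sends the generator $t_{n+1}$ to the nonzero minimal generator $t_{n+1}^2$ of $I_{n+1}$. The fallacy in the one-line argument is that $\phi_n$ is a map between two \emph{different} modules, not an endomorphism, so the principle ``multiplication by $a$ may be computed on either tensor factor'' does not apply; indeed $\phi_n$ cannot be written as $t_{n+1}\cdot\psi$ for any $A$-linear $\psi: A/I_n \to A/I_{n+1}$, since $t_{n+1}$ already acts by zero on the source, forcing $t_{n+1}\cdot\psi = 0 \neq \phi_n$. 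What is true is precisely what you prove: a class in $\Tor_j$ is a finite sum of degree-$j$ monomials, each containing at most $j$ of the classes $\eta_\bullet$, while surviving $r$ steps requires containing $\eta_{n+1},\ldots,\eta_{n+r}$; hence the composite of any $j+1$ consecutive transition maps vanishes on $\Tor_j$, and since $\Tor$ commutes with filtered colimits this eventual (rather than one-step) vanishing suffices. So your argument is not merely an alternative route: it is a repair of an incorrect step in the paper's own proof, trading the false claim ``each transition map is null after $\underset{A}{\otimes}\, k$'' for the correct and only slightly weaker statement that long composites are null in each homological degree.
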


The proof can be found in \S \ref{ss:cons-finish}.

\begin{proof}[Proof that Theorem \ref{t:pro-ainfty} implies 
Theorem \ref{t:ainfty-reps}]

Since each $t_i$ acts on $V$ nilpotently,\footnote{
And not merely \emph{locally} nilpotently.}
the induced action on $V^{\vee} \neq 0$ is also nilpotent.
Moreover, all of the operators $t_i$ acting on $V^{\vee}$ commute.
Therefore, $V^{\vee}$ has a canonical structure
as an object of $\QCoh(\bB \bA^{\infty})^{\heart}$,
since this is the abelian category of a non-derived vector
spaces equipped with $\bZ^{> 0}$-many
locally nilpotent and pairwise commuting operators.

Finally, observe that:

\[
H^j(\Gamma(\bB \bA^{\infty},V^{\vee})) = 
\Ext^j_{\QCoh(\bB \bA^{\infty})}(k,V^{\vee}) =
\Tor_j^A(k,V)^{\vee} = 0
\]

\noindent since the complex computing these $\Ext$s is dual
to the complex computing the $\Tor$s. This gives the claim.

\end{proof}

\subsection{Construction of $V$, and a heuristic}

We now construct $V$ from Theorem \ref{t:pro-ainfty}, 
and explain why the relevant $\Tor_0$ vanishes.

Namely, $V$ has a basis $v_S$ indexed by finite subsets 
$S \subset \bZ^{>0}$, and we define 
$t_i \cdot v_S = v_{S \setminus \{i\}}$ if $i \in S$
and $t_i \cdot v_S = 0$ if $i \not \in S$. Clearly $t_i^2$ acts
by zero on $V$.

To see that $\Tor_0^A(V,k) = 0$, it suffices to see that
there are no morphisms $f:V \to k$ in $A\mod^{\heart}$.
For any such $f$, we should show that $f(v_S) = 0$ for all $S$ as above. 
Since $S$ is finite
and $\bZ^{>0}$ is infinite, we can find $i \not \in S$.
Then $t_i v_{S \cup \{i\}} = v_S$, so 
$f(v_S) = t_i f(v_{S \cup i}) = 0$ because each $t_i$ acts on $k$ by zero.
(A similar argument also works for $\Tor_1$.)

\subsection{}\label{ss:cons-finish}

We now show that the higher $\Tor$s vanish as well.

\begin{proof}[Proof of Theorem \ref{t:pro-ainfty}]

\step 

First, we rewrite the representation $V$ in a more conceptual
way.

For each $n>0$, let $I_n \subset A$ be the ideal generated
by $t_1^2,\ldots,t_n^2$ and all $t_i$ for $i >n$.

Then we claim $V = \colim_n A/I_n$, where the structure morphisms:

\[
A/I_n \to A/I_{n+1}
\]

\noindent are given by multiplication
by $t_{n+1}$. Indeed, the relevant
structure morphism sends 
$A/I_n$ to $V$ by
mapping the ``vacuum vector" $1 \in A/I_n$ to
$v_{\{1,\ldots,n\}}$ in the right hand side,
which is an injection whose image identifies
with the subspace of $V$ spanned by $v_S$
for $S \subset \{1,\ldots n\} \subset \bZ^{>0}$.

\step 

Next, let $\fm = (t_1,t_2,\ldots) \subset A$, and
observe that the morphism:\footnote{Here and always we use
$\underset{A}{\otimes}$ for the derived tensor product,
i.e., for $\overset{L}{\underset{A}{\otimes}}$.}

\[
A/I_n \underset{A}{\otimes} A/\fm \xar{t_{n+1}\cdot }
A/I_{n+1} \underset{A}{\otimes} A/\fm
\]

\noindent is zero, i.e., is canonically nullhomotopic;
indeed, this follows from the fact that multiplication by 
$t_{n+1}$ is zero on $A/\fm$.

Passing to the colimit over $n$ and using our earlier expression
for $V$, we see that $V \otimes_A A/\fm = 0$ as desired.

\end{proof}

\subsection{Application to non-1-affineness of $\bA^{\infty}$}\label{ss:main}

We now show that $\bA^{\infty}$ is not 1-affine.

\begin{proof}[Proof of Theorem 2.4.5 from \cite{shvcat}]

We follow the beginning of the proof of the theorem from
\cite{shvcat} 9.3.2.

Namely, it suffices to show that the canonical functor:

\[
\Vect \underset{\QCoh(\bA^{\infty}) }{\otimes} \Vect \to 
\QCoh(\Spec(k) \underset{\bA^{\infty}}{\times} \Spec(k))
\]

\noindent is not an equivalence. 

Let $\bG_m$ act on $\bA^{\infty}$ by scaling. The above
functor is a morphism of $\QCoh(\bG_m)$-module\footnote{Here
$\QCoh(\bG_m)$ is equipped with the convolution monoidal
structure.}
categories. 

By 1-affineness of $\bB \bG_m$, it is equivalent to show that the
above functor is not an equivalence
after passing to $\bG_m$-equivariant categories.
Then using the ``shift of grading trick" (c.f. \cite{arinkin-gaitsgory} A.2)
and 1-affiness of $\bB \bG_m$ again, we see that it is equivalent
to show that the functor:

\[
\Vect \underset{\underset{n}{\lim} \big(\Sym(k^{\oplus n}[-2])\mod\big)}{\otimes} \Vect \to
\underset{n}{\lim} (k \underset{\Sym(k^{\oplus n}[-2])} {\otimes} k)\mod =
\underset{n}{\lim} \Sym(k^{\oplus n}[-1])\mod =
\QCoh(\bB \bA^{\infty})
\]

\noindent is not an equivalence, where all structure maps in the
limits are induced
by the projections $k^{\oplus n+1} \to k^{\oplus n}$ (so are given
by tensor product functors everywhere).

By the Beck-Chevalley formalism, the (discontinuous)
right adjoint to the canonical functor:

\[
\Vect = \Vect \underset{\Vect}{\otimes} \Vect \to 
\Vect \underset{\underset{n}{\lim} \Sym(k^{\oplus n}[-2])\mod}{\otimes} \Vect
\]

\noindent (which is induced by the canonical functors
$\Vect \to \Sym(k^{\oplus n}[-2])\mod$ sending $k$ to 
to the free module $\Sym(k^{\oplus n}[-2])$, i.e., the trivial
representation in $\QCoh(\bB \bA^n)$) is monadic
(c.f. \cite{shvcat} Lemma 9.3.3). 

Therefore, it suffices to show that the right adjoint to the
corresponding functor:

\[
\Vect \to 
\underset{n}{\lim} (k \underset{\Sym(k^{\oplus n}[-2])} {\otimes} k) \mod =
\QCoh(\bB \bA^{\infty})
\]

\noindent is not conservative (in particular, not monadic).
But this functor corresponds to group cohomology, and Theorem
\ref{t:ainfty-reps} says that it is not conservative.

\end{proof}

\bibliography{bibtex}{}
\bibliographystyle{alphanum}

\end{document}